\newtheorem{proposition}{Proposition}[section]
\newtheorem{corollary}[proposition]{Corollary}
\newtheorem{theorem}[proposition]{Theorem}
\theoremstyle{definition}
\newtheorem{example}[proposition]{Example}
\newtheorem{remark}[proposition]{Remark}
\newcommand{\thlabel}[1]{\label{th:#1}}
\newcommand{\thref}[1]{Theorem~\ref{th:#1}}
\newcommand{\selabel}[1]{\label{se:#1}}
\def\ot{\otimes}
\newcommand{\Cc}{\mathcal{C}}
\def\*C{{}^*\hspace*{-1pt}{\Cc}}
\def\text#1{{\rm {\rm #1}}}
\begin{document}

\title[Monomorphisms of coalgebras]{Monomorphisms of coalgebras}

\author{A.L. Agore  (Bucharest)} \thanks{The author was supported by
CNCSIS grant 24/28.09.07 of PN II "Groups, quantum groups, corings
and representation theory". }
\address{Faculty of Mathematics and Computer Science, University of Bucharest,
Str. Academiei 14, RO-010014 Bucharest 1, Romania \textbf{and}
\newline Department of Mathematics, Academy of Economic Studies,
Piata Romana 6, RO-010374 Bucharest 1,  Romania}
\email{ana.agore@fmi.unibuc.ro}

\keywords{coalgebra, Hopf algebra, monomorphism, epimorphism}

\subjclass[2000]{18A30}

\begin{abstract}
We prove new necessary and sufficient conditions for a morphism of
coalgebras to be a monomorphism, different from the ones already
available in the literature. More precisely, $\varphi: C
\rightarrow D$ is a monomorphism of coalgebras if and only if the
first cohomology groups of the coalgebras $C$ and $D$ coincide if
and only if $\sum_{i \in I}\varepsilon(a^{i})b^{i} = \sum_{i \in
I} a^{i} \varepsilon(b^{i})$, for all $\sum_{i \in I}a^{i} \otimes
b^{i} \in C \square_{D} C$. In particular, necessary and
sufficient conditions for a Hopf algebra map to be a monomorphism
are given.
\end{abstract}

\maketitle

\section*{Introduction}

In any concrete category $\mathcal{C}$ the natural problem of
whether epimorphisms are surjective maps arise, as well as the
dual problem of whether the monomorphisms are injective maps. This
type of problems have already been studied before in several well
known categories: for example in \cite{R} it is shown that the
property of epimorphisms of being surjective holds in the
categories of von Neumann algebras, $C^{*}$-algebras, groups,
finite groups, Lie algebras, compact groups, while it fails to be
true in the categories of finite dimensional Lie algebras,
semisimple finite dimensional Lie algebras, locally compact groups
and unitary rings (see \cite{KNI}, \cite{Silv}). The more recent
paper \cite{C} deals with the same problems in the context of Hopf
algebras: several examples of non-injective monomorphisms and
non-surjective epimorphisms are given. It turns out that the above
problem is also intimately related to Kaplansky's first conjecture
in the way that every non-surjective epimorphism of Hopf algebras
provides a counterexample to Kaplansky's problem.

In \cite{N} the problems mentioned above are studied in the
category of coalgebras. The problem of whether epimorphisms of
coalgebras are surjective maps is easily dealt with using the
existence of a cofree coalgebra on every vector space: hence it is
enough to give a positive answer to this problem. The dual
problem, on the other hand, is more interesting: an example of a
non-injective monomorphism is given and several characterizations
of monomorphisms are proved (\cite[Theorem 3.5]{N}). In this note
we complete the above characterization with two new equivalences.
Our interest in this problem comes also from the fact that in the
light of \cite[Proposition 2.5]{C} which states that a morphism of
Hopf algebras is a monomorphism if and only if it is a
monomorphism viewed as a morphism of coalgebras it turns out that
the same characterization holds for Hopf algebra monomorphisms.\\
A detailed discussion regarding the theory of coalgebras can be
found in \cite{DNR} and\cite{Chin}.

\section{Preliminaries}\selabel{1}
Throughout this paper, $k$ is an arbitrary field. Unless specified
otherwise, all vector spaces, homomorphisms, algebras, coalgebras,
tensor products, comodules and so on are over $k$. For the
standard categories we use the following notations: ${}_k{\mathcal
{M}}$ ($k$-vector spaces),
 $k$-Coalg (coalgebras over $k$), ${\mathcal
{M}}^{C}$ (right $C$-comodules), ${}^{C}{\mathcal
{M}}^{D}$ ($(C,D)$-bicomodules).\\
For a coalgebra $C$, we use Sweedler's $\Sigma$-notation, that is,
$\Delta(c)= c_{(1)}\ot c_{(2)},~ (I\ot\Delta)\Delta(c)= c_{(1)}\ot
c_{(2)}\ot c_{(3)}$, etc. We also use the Sweedler notation for
left and right $C$-comodules: $\rho_M^{r}(m)= m_{[0]}\otimes
m_{[1]}$ for any $m\in M$ if $(M,\rho_M^{r})$ is a right $
C$-comodule and $\rho_N^{l}(n)= n_{<-1>}\otimes n_{<0>}$ for any
$n \in N$ if $(N,\rho_N^{l})$ is a left $C$-comodule. For further
details regarding the theory of comodules we refer to \cite{BW}.

If $M$ is a right $C$-comodule with structure map $\rho_{M}^{r}$
and $N$ a left $C$-comodule with structure map $\rho_{N}^{l}$, the
\textit{cotensor product} $M \square_{C} N$ is the kernel of the
$k$-linear map: $$\rho_{M}^{r} \otimes I - I \otimes \rho_{N}^{l}:
M \otimes N \rightarrow M \otimes C \otimes N.$$ Given comodule
maps $f:M \rightarrow M'$ and $g:N \rightarrow N'$, the $k$-linear
map $f \otimes g: M \otimes N \rightarrow M' \otimes N'$ induces a
$k$-linear map $f \square_{C} g: M \square_{C} N \rightarrow M'
\square_{C} N'$. A left $C$-comodule $M$ induces a functor $ -
\square_{C} M : {\mathcal {M}}^{C} \to {}_k{\mathcal {M}}$.

If $\varphi: C \rightarrow D$ is a coalgebra map then every right
(left) $C$-comodule $(M, \rho_{M}^{r})$ can be made into a right
(left) $D$-comodule with structure map $\tau_{M}^{r}: M
\rightarrow M \otimes D$ given by $\tau_{M}^{r}(m) = m_{[0]}
\otimes \varphi(m_{[1]})$. This association defines a functor
$\varphi^{*}_{C,D} : {\mathcal {M}}^{C} \rightarrow
{\mathcal{M}}^{D}$ usually called the \textit{corestriction functor}.\\
If $M \in {}^{C}{\mathcal {M}}^{D}$ we obtain a functor $ -
\square_{C} M: {\mathcal {M}}^{C} \rightarrow {\mathcal {M}}^{D}$.
In particular, $C$ becomes a left $D$-comodule via $\varphi$ and
we obtain a functor $- \square_{D} C: {\mathcal {M}}^{D} \to
{\mathcal {M}}^{C}$ called the \textit{coinduction functor} which
is a right adjoint to the corestriction functor
$\varphi^{*}_{C,D}$(\cite[22.12]{BW}). Recall that if $(F,G)$ is a
pair of adjoint functors, with $F: \mathcal{C} \rightarrow
\mathcal{D}$ and $G: \mathcal{D} \rightarrow \mathcal{C}$ if and
only if there exist two natural transformations $\eta:1_{\mathcal
{C}} \rightarrow GF$ and $\varepsilon: FG \rightarrow 1_{\mathcal
{D}}$, called \textit{the unit} and \textit{the counit} of the
adjunction, such that $G(\varepsilon_{D}) \circ \eta_{G(D)} =
I_{G(D)}$ and $\varepsilon_{F(C)} \circ F(\eta_{C}) = I_{F(C)}$
for all $C \in \mathcal{C}$ and $D \in
\mathcal{D}$.\\
Recall from \cite{IDN} the construction of the \textit{trivial
coextension} of a coalgebra $C$ by a $(C,C)$-bicomodule $N$. We
define a comultiplication and a counit on the space $C\rtimes N :=
C \oplus N$ given by:
\begin{eqnarray*}
\Delta (c,n) &{=}& (c_{(1)},0) \otimes (c_{(2)},0) + (n_{<-1>},0)
\otimes (0,n_{<0>}) + (0,n_{[0]}) \otimes (n_{[1]},0)\\
\varepsilon(c,n) &{=}& \varepsilon(c)
\end{eqnarray*}
for all $(c, n) \in C \oplus N$. In this way $C\rtimes N$ becomes
a coalgebra, known as the trivial coextension of $C$ and $N$.

Define the map $\pi_{C}: C\rtimes N \rightarrow C$ by $
\pi_{C}(c,n) = c$ for all $(c,n) \in C \oplus N$. For every $(c,n)
\in C\rtimes N$ we have:
 $$
\varepsilon_{C} \circ \pi_{C}(c,n) =\varepsilon_{C}(c) =
\varepsilon(c,n)
 $$
 and
\begin{eqnarray*}
(\pi_{C} \otimes \pi_{C})(\Delta(c,n)) &{=}& (\pi_{C} \otimes
\pi_{C})\bigl((c_{(1)},0) \otimes (c_{(2)},0) + (n_{<-1>},0)
\otimes (0,n_{<0>}) +\\
&&+ (0,n_{[0]}) \otimes (n_{[1]},0)\bigl)\\
&{=}& c_{(1)} \otimes c_{(2)} + n_{<-1>} \otimes 0 + 0 \otimes
n_{[0]}\\
&{=}& c_{(1)} \otimes c_{(2)}\\
&{=}& \Delta_{C} \circ \pi_{C}(c,n)
\end{eqnarray*}
Thus $\pi_{C}$ is a coalgebra map.

We also require some notions related to homological coalgebra. For
basic definitions and properties we refer to \cite{D}. We just
recall here, for further reference, the description of the first
cohomology group of a coalgebra $C$ with coefficients in a
$(C,C)$-bicomodule $N$:
\begin{eqnarray*}
H^{0}(N,C) &=& \{ \gamma \in N^{*} | (I \otimes
\gamma)\rho_{N}^{l} = (\gamma \otimes I) \rho_{N}^{r} \} \\
&=&  \{ \gamma \in N^{*} | n_{<-1>} \gamma (n_{<0>})  = \gamma
(n_{[0]}) n_{[1]}, \forall n\in N \}
\end{eqnarray*}

\section{Characterizations of monomorphisms of coalgebras}\selabel{2}

As mentioned before, a characterization of monomorphisms in
$k$-Coalg is given in \cite[Theorem 3.5]{N} and the equivalences
$(1) - (5)$ in \thref{c} are proved there. In what follows we
complete the description of monomorphisms in $k$-Coalg with two
more characterizations: the first one indicates a cohomological
description of monomorphisms while the other is an elementary one
involving the cotensor product $C \square_{D} C$.

\begin{theorem}\thlabel{c}
Let $\varphi: C \rightarrow D$ be a coalgebra map. The following
statements are equivalent:
\begin{enumerate}
\item[1)] $\varphi$ is a monomorphism in the category $k$-Coalg;
\item[2)] The functor $\varphi_{C,D}^{*}$ is full; \item[3)] $C
\square_{D} {\rm Ker} (\varphi) = 0$; \item[4)] The map $\eta_{C}
= \Delta_{C}: C \rightarrow C \square_{D} C$ is surjective (hence
is bijective); \item[5)] The unit of the adjunction
$(\varphi^{*}_{C,D},  - \square_{D} C)$, $\eta: 1_{{\mathcal
{M}}^{C}} \rightarrow (- \square_{D} C) \circ \varphi^{*}_{C,D}$
is a natural isomorphism; \item[6)] $H^{0}(N,C) = H^{0}(N,D)$, for
any $(C,C)$-bicomodule $N$; \item[7)] $\sum_{i \in
I}\varepsilon(a^{i})b^{i} = \sum_{i \in I} a^{i}
\varepsilon(b^{i})$, for all $\sum_{i \in I}a^{i} \otimes b^{i}
\in C \square_{D} C$.
\end{enumerate}
\end{theorem}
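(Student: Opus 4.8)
The strategy is to treat the already-established equivalences $(1)$--$(5)$ from \cite[Theorem 3.5]{N} as a black box and to fold the two new conditions $(6)$ and $(7)$ into the existing chain. Concretely, I would prove the cycle of implications $(3)\Rightarrow(7)\Rightarrow(6)\Rightarrow(3)$, or equivalently route through condition $(4)$, since $(4)$ already makes explicit what the cotensor product $C\square_D C$ looks like when $\varphi$ is mono. The guiding observation for $(7)$ is that on the subspace $\Delta_C(C)\subseteq C\square_D C$ the equality $\sum\varepsilon(a^i)b^i=\sum a^i\varepsilon(b^i)$ holds automatically (both sides equal $c$ when $\sum a^i\otimes b^i=c_{(1)}\otimes c_{(2)}$, by the counit axiom), so $(7)$ is really the statement that $\Delta_C(C)$ exhausts $C\square_D C$, i.e.\ $(7)\Leftrightarrow(4)$.

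For $(4)\Rightarrow(7)$: given $\sum_i a^i\otimes b^i\in C\square_D C$, surjectivity of $\eta_C=\Delta_C$ onto $C\square_D C$ gives some $c\in C$ with $c_{(1)}\otimes c_{(2)}=\sum_i a^i\otimes b^i$; then apply $\varepsilon\otimes I$ and $I\otimes\varepsilon$ to both sides and use the counit axioms to get $\sum_i\varepsilon(a^i)b^i=c=\sum_i a^i\varepsilon(b^i)$. For the converse $(7)\Rightarrow(3)$ (or $(7)\Rightarrow(4)$): take $\sum_i a^i\otimes b^i\in C\square_D\Ker(\varphi)$, so each $b^i\in\Ker(\varphi)$, hence $\varepsilon(b^i)=0$ (since $\varepsilon_C=\varepsilon_D\circ\varphi$ kills $\Ker(\varphi)$), so the right-hand side of $(7)$ vanishes; but the cotensor relation lets one rewrite $\sum_i\varepsilon(a^i)b^i$ and conclude $\sum_i a^i\otimes b^i=0$, giving $C\square_D\Ker(\varphi)=0$. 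Here the one subtlety is justifying that $C\square_D\Ker(\varphi)\subseteq C\square_D C$ compatibly, using the exactness of $C\square_D-$ on the left (as $C$ is a left $D$-comodule via $\varphi$) together with left exactness of the cotensor functor, so that the inclusion $\Ker(\varphi)\hookrightarrow C$ induces an inclusion on cotensor products; this is where I would be most careful.

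For the cohomological equivalence, recall $H^0(N,C)=\{\gamma\in N^*\mid n_{<-1>}\gamma(n_{<0>})=\gamma(n_{[0]})n_{[1]}\}$, the equalizer taking place in $C\otimes N$, while $H^0(N,D)$ is the analogous equalizer in $D\otimes N$ with the $D$-coactions obtained by pushing forward along $\varphi$. There is always an inclusion $H^0(N,C)\subseteq H^0(N,D)$ since a relation holding in $C\otimes N$ maps forward to one in $D\otimes N$. The content of $(6)$ is the reverse inclusion, and I would show it is equivalent to injectivity of $(\varphi\otimes I):C\otimes N\to D\otimes N$ restricted to the relevant subspace — more precisely, to the condition that $\varphi\square_D\Ker(\varphi)$-type obstructions vanish. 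The cleanest route is probably $(3)\Rightarrow(6)$: if $C\square_D\Ker(\varphi)=0$ then the difference of the two $C$-coactions on any $\gamma$-witnessing element, which a priori lives in $C\otimes N$, actually lands in $\Ker(\varphi)\otimes N$ in a way forced into a cotensor product that must be zero; hence the $C$-relation already holds. Conversely $(6)\Rightarrow(3)$ by testing against a well-chosen bicomodule $N$ — the natural candidate is $N=\Ker(\varphi)$ (or a cofree-type bicomodule built from it), so that a nonzero element of $C\square_D\Ker(\varphi)$ produces a functional in $H^0(N,D)\setminus H^0(N,C)$.

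The main obstacle I anticipate is not any single implication in isolation but the bookkeeping in the cohomological step: one must set up the $(C,C)$-bicomodule structures, identify $H^0(N,-)$ concretely as an equalizer, and match "the difference of two coactions lies in a cotensor product" with "the cohomology groups agree", all while keeping straight which tensor factor carries which coaction. Once condition $(3)$ (equivalently $(4)$) is pinned as the hub, everything else is a short diagram chase plus the counit axioms; so I would organize the write-up as: first $(4)\Leftrightarrow(7)$ by the elementary argument above, then $(3)\Leftrightarrow(6)$ via the cotensor-vanishing reformulation of the equalizer, and finally note these hook into the chain $(1)$--$(5)$ of \cite{N} to close the loop.
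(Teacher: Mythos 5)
Your outline (attach $(7)$ to $(3)$/$(4)$ and $(6)$ to $(3)$, then quote \cite[Theorem 3.5]{N} for $(1)$--$(5)$) is a genuinely different route from the paper, which instead closes the cycle $(1)\Rightarrow(6)\Rightarrow(7)\Rightarrow(1)$: it proves $(1)\Rightarrow(6)$ by building the trivial coextension $C\rtimes N$ and two coalgebra maps $\pi_C,\beta$ equalized by $\varphi$; it proves $(6)\Rightarrow(7)$ by applying $(6)$ to the single bicomodule $N=C\square_D C$ with the functional $T=\varepsilon\otimes\varepsilon\in H^0(C\square_D C,D)$; and it proves $(7)\Rightarrow(1)$ by exhibiting $\nu_M(\sum_i m^i\otimes c^i)=\sum_i m^i\varepsilon(c^i)$ as an inverse to the unit, so that $\varphi^*_{C,D}$ is full. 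Your plan could in principle work, but two of its steps have genuine gaps. In $(7)\Rightarrow(3)$ you cannot conclude $\sum_i a^i\otimes b^i=0$ by applying $(7)$ to that one element: knowing $\sum_i\varepsilon(a^i)b^i=\sum_i a^i\varepsilon(b^i)=0$ only forces the element into $\Ker(\varphi)\otimes\Ker(\varphi)$. Concretely, for $C$ spanned by grouplikes $g,h$, $D=k$ and $\varphi=\varepsilon$, the element $(g-h)\otimes(g-h)\in C\square_D\Ker(\varphi)$ is nonzero although both counit contractions vanish; so the elementwise inference is invalid, and no amount of ``rewriting via the cotensor relation'' on that single element saves it. What is needed is to apply $(7)$ to coaction-translates of the element: since $(\Delta\otimes I)(C\square_D C)\subseteq C\otimes(C\square_D C)$, the maps $I\otimes\varepsilon\otimes I$ and $I\otimes I\otimes\varepsilon$ agree on $(\Delta\otimes I)(x)$; the first returns $x$ and the second is $0$ when $x\in C\otimes\Ker(\varphi)$, whence $x=0$. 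This sub-bicomodule trick is the real content of the implication, not the inclusion $C\square_D\Ker(\varphi)\subseteq C\square_D C$ you flagged, which is immediate over a field.

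The second gap is $(6)\Rightarrow(3)$: your proposed test object $N=\Ker(\varphi)$ is not a $(C,C)$-bicomodule. $\Ker(\varphi)$ is only a coideal, $\Delta(\Ker\varphi)\subseteq\Ker\varphi\otimes C+C\otimes\Ker\varphi$, so it carries no natural $C$-coactions (only $D$-coactions obtained by applying $\varphi$ to the other leg), and ``a cofree-type bicomodule built from it'' is not yet a construction. The paper's choice $N=C\square_D C$ with $T=\varepsilon\otimes\varepsilon$ repairs this and yields $(6)\Rightarrow(7)$ directly, which is all you need to close your cycle. Your $(3)\Rightarrow(6)$ direction is under-specified (once $\gamma$ is applied, the ``difference of the two coactions'' lives in $C$, not in $C\otimes N$, and it does not land in $\Ker(\varphi)\otimes N$), but it can be completed: for $\gamma\in H^0(N,D)$ put $u(n)=n_{<-1>}\gamma(n_{<0>})-\gamma(n_{[0]})n_{[1]}\in\Ker(\varphi)$, check using the $H^0(N,D)$ relation and the bicomodule axiom that $u$ is left $D$-colinear, so that $n_{<-1>}\otimes u(n_{<0>})\in C\square_D\Ker(\varphi)=0$, and then apply $\varepsilon\otimes I$ to get $u(n)=0$; this is heavier bookkeeping than the paper's coextension argument for $(1)\Rightarrow(6)$, but it works. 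The step $(4)\Rightarrow(7)$ is correct as you state it.
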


\begin{proof}
$1) \Rightarrow 6)$ Suppose $\varphi$ is a monomorphism of
coalgebras. It is easy to see that $H^{0}(N,C) \subset
H^{0}(N,D)$. Now let $\gamma \in H^{0}(N,D)$. Define the map
$\beta: C\rtimes N \rightarrow C$ by:
\begin{eqnarray*}
\beta(c,n) = c - n_{<-1>}\gamma(n_{<0>})+\gamma(n_{[0]})n_{[1]}
\end{eqnarray*}
 for all $(c,n) \in C\rtimes N$. We prove that $\beta$ is a coalgebra map. For all $(c,n) \in C\rtimes N$
 we have:
\begin{eqnarray*}
\varepsilon_{C} \circ \beta(c,n) &{=}& \varepsilon_{C}(c -
n_{<-1>}\gamma(n_{<0>})+\gamma(n_{[0]})n_{[1]})\\
&{=}& \varepsilon_{C}(c) -
\varepsilon_{C}(n_{<-1>})\gamma(n_{<0>})+\gamma(n_{[0]})
\varepsilon_{C}(n_{[1]})\\
&{=}& \varepsilon_{C}(c) -
\gamma\bigl(\varepsilon_{C}(n_{<-1>})n_{<0>}\bigl)+\gamma\bigl(n_{[0]}
\varepsilon_{C}(n_{[1]}\bigl)\\
&{=}& \varepsilon_{C}(c) - \gamma(n) + \gamma(n)\\
&{=}& \varepsilon(c,n)
\end{eqnarray*}
and
\begin{eqnarray*}
(\beta \otimes \beta)\circ \Delta(c,n) &{=}& (\beta \otimes
\beta)\bigl( (c_{(1)},0) \otimes (c_{(2)},0) + (m_{<-1>},0)
\otimes (0,m_{<0>}) + \\
&&+ (0,m_{[0]}) \otimes (m_{[1]},0)\bigl)\\
&{=}& \beta\bigl((c_{(1)},0)\bigl) \otimes
\beta\bigl((c_{(2)},0)\bigl) + \beta\bigl((m_{<-1>},0)\bigl)
\otimes \beta\bigl((0,m_{<0>})\bigl) +\\
&& + \beta\bigl((0,m_{[0]})\bigl) \otimes
\beta\bigl((m_{[1]},0)\bigl)\\
&{=}& c_{(1)} \otimes c_{(2)} + n_{<-1>} \otimes \bigl( -n_{<0>
<-1>} \gamma(n_{<0> <0>}) +\\
&& + \gamma(n_{<0> [0]})n_{<0> [1]}\bigl)+ \bigl(-n_{[0] <-1>}
\gamma(n_{[0] <0>}) + \gamma(n_{[0] [0]})n_{[0] [1]}\bigl) \otimes
n_{[1]}\\
&{=}& c_{(1)} \otimes c_{(2)} - n_{<-1>} \otimes n_{<0> <-1>}
\gamma(n_{<0> <0>}) + \gamma(n_{[0] [0]})n_{[0] [1]} \otimes
n_{[1]}\\
&{=}& c_{(1)} \otimes c_{(2)} - n_{<-1> (1)} \otimes n_{<0> (2)}
\gamma(n_{<0>}) + \gamma(n_{[0]})n_{[1] (1)} \otimes
n_{[1] (2)}\\
&{=}& c_{(1)} \otimes c_{(2)} -
\Delta_{C}\bigl(n_{<-1>}\gamma(n_{<0>})\bigl) +
\Delta_{C}\bigl(\gamma(n_{[0]})n_{[1]}\bigl)\\
&{=}& \Delta_{C}\bigl(c -
n_{<-1>}\gamma(n_{<0>})+\gamma(n_{[0]})n_{[1]}\bigl)\\
&{=}&\Delta_{C} \circ \beta(c,n)
\end{eqnarray*}
Hence, $\beta$ is a coalgebra map. Furthermore, it is easy to see
that $\varphi \circ \pi_{C} = \varphi \circ \beta$ and $\pi_{C} =
\beta$ because $\varphi$ is a monomorphism. Thus
$n_{<-1>}\gamma(n_{<0>}) = \gamma(n_{[0]})n_{[1]}$ which implies
that $\gamma \in H^{0}(N,C)$.\\

$6) \Rightarrow 7)$ $C \square_{D} C$ is a $(C,C)$-bicomodule with
left and right structures given by:\\
$$\psi_{C \square_{D} C}^{l}(\sum_{i \in I}a^{i} \otimes b^{i}) =
\sum_{i \in I} a^{i}_{(1)} \otimes \bigl(a^{i}_{(2)} \otimes
b^{i}\bigl)$$ and $$\psi_{C \square_{D} C}^{r}(\sum_{i \in I}a^{i}
\otimes b^{i}) = \sum_{i \in I}\bigl(a^{i} \otimes
b^{i}_{(1)}\bigl) \otimes b^{i}_{(2)}$$ for all $\sum_{i \in
I}a^{i} \otimes b^{i} \in C \square_{D} C$. We define the
$k$-linear map $T:C \square_{D} C \rightarrow k$ by: $$T(\sum_{i
\in I}a^{i} \otimes b^{i}) = \sum_{i \in I}\varepsilon(a^{i})
\varepsilon(b^{i})$$ for all $\sum_{i \in I}a^{i} \otimes b^{i}
\in C \square_{D} C$. Now let $\sum_{i \in I}a^{i} \otimes b^{i}
\in C \square_{D} C$, that is: $$\sum_{i \in I} a^{i}_{(1)}
\otimes \varphi(a^{i}_{(2)}) \otimes b^{i} = \sum_{i \in I} a^{i}
\otimes \varphi(b^{i}_{(1)}) \otimes b^{i}_{(2)}$$ By applying
$\varepsilon \otimes I \otimes \varepsilon$ in the above identity
we obtain $\sum_{i \in I} \varphi(a^{i}) \varepsilon(b^{i}) =
\sum_{i \in I} \varepsilon(a^{i}) \varphi(b^{i})$. Thus $T \in
H^{0}(C \square_{D} C,D) = H^{0}(C \square_{D} C,C)$
and it follows from here that $\sum_{i \in I} a^{i}\varepsilon(b^{i}) = \sum_{i \in I} \varepsilon(a^{i})b^{i}$.\\

$7) \Rightarrow 1)$ Let $M \in {\mathcal {M}}^{C}$. Define
$\nu_{M}: M \square_{D} C \rightarrow M$ by $\nu_{M}(\sum_{i \in
I} m^{i} \otimes c^{i}) = \sum_{i \in I} m^{i}\varepsilon(c^{i})$.
For any $\sum_{i \in I} m^{i} \otimes c^{i} \in M \square_{D}C$ we
have:
\begin{eqnarray*}
(\nu_{M} \circ I)\circ \rho_{M \square_{D}C}^{r}(\sum_{i \in I}
m^{i} \otimes c^{i}) &{=}& \sum_{i \in I}(\nu_{M}
\circ I)(m^{i} \otimes c^{i}_{(1)} \otimes c^{i}_{(2)})\\
&{=}& \sum_{i \in I} \nu_{M}(m^{i} \otimes c^{i}_{(1)}) \otimes c^{i}_{(2)}\\
&{=}& \sum_{i \in I} m^{i}\varepsilon(c^{i}_{(1)}) \otimes c^{i}_{(2)}\\
&{=}& \sum_{i \in I} m^{i}_{[0]}\varepsilon(m^{i}_{[1]}) \otimes c^{i}\\
&{=}& \sum_{i \in I} m^{i}_{[0]} \otimes \varepsilon(m^{i}_{[1]}) c^{i}\\
&{=}& \sum_{i \in I} m^{i}_{[0]} \otimes m^{i}_{[1]} \varepsilon(c^{i})\\
&{=}& \sum_{i \in I} \rho_{M}^{r}(m^{i} \varepsilon(c^{i}))\\
&{=}& (\rho_{M}^{r} \circ \nu_{M})(\sum_{i \in I} m^{i} \otimes
c^{i})
\end{eqnarray*}
where we use the fact that $\sum_{i \in I} m^{i}_{[1]} \otimes
c^{i} \in C \square_{D}C$ for all $\sum_{i \in I} m^{i} \otimes
c^{i} \in M \square_{D}C$. Thus $\nu_{M}$ is a morphism of right
$C$-comodules. Moreover, in the computations above we also prove
that $(\rho_{M}^{r} \circ \nu_{M})(\sum_{i \in I} m^{i} \otimes
c^{i}) = \sum_{i \in I} m^{i} \otimes c^{i}$ for all $\sum_{i \in
I} m^{i} \otimes c^{i} \in M \square_{D}C$. It follows that for
all $M \in {\mathcal {M}}^{C}$ there exists a morphism of right
$C$-comodules such that $\rho_{M}^{r} \circ \nu_{M} = I$. Having
in mind that $\eta_{M} = \rho_{M}^{r}$, as remarked before, THIS
is enough to prove that $\varphi_{C,D}^{*}$ is a full functor.
Now, in the light of \cite[Theorem 3.5]{N} we obtain that
$\varphi$ is a monomorphism in $k$-Coalg.
\end{proof}

In view of a remark of A. Chirv\v asitu (\cite{C}) that a morphism
of Hopf algebras is a monomorphism if and only if it is a
monomorphism viewed as a morphism of coalgebras we obtain the
following useful fact:

\begin{corollary}
Let $\varphi: K \rightarrow L$ be a Hopf algebra map. The
following are equivalent:
\begin{enumerate}
\item[1)] $\varphi: K \rightarrow L$ is a Hopf algebra
monomorphism; \item[2)] The map $\eta_{K} = \Delta_{K}: K
\rightarrow K \square_{L} K$ is surjective (hence is bijective);
\item[3)] $H^{0}(N,K) = H^{0}(N,L)$ for any $(K,K)$-bicomodule
$N$; \item[4)] $\sum_{i \in I}\varepsilon(x^{i})y^{i} = \sum_{i
\in I} x^{i} \varepsilon(y^{i})$ for all $\sum_{i \in I}x^{i}
\otimes y^{i} \in K \square_{L} K$.
\end{enumerate}
\end{corollary}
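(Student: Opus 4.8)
The plan is to obtain the statement as a direct consequence of \thref{c} together with the result of A. Chirv\v asitu quoted just above. The point to notice is that conditions $(2)$, $(3)$, $(4)$ of the corollary are nothing but conditions $(4)$, $(6)$, $(7)$ of \thref{c} read for the underlying coalgebra map of $\varphi$, after the harmless change of notation $C \mapsto K$, $D \mapsto L$, $a^i \mapsto x^i$, $b^i \mapsto y^i$. Indeed, a $(K,K)$-bicomodule is precisely a bicomodule over the underlying coalgebra of $K$, and the cotensor product $K \square_L K$ together with its $(K,K)$-bicomodule structure and the map $\eta_K = \Delta_K$ depend only on the coalgebra structures of $K$ and $L$ and on $\varphi$ as a morphism of coalgebras; the algebra and antipode structures play no role in formulating $(2)$--$(4)$.

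First I would invoke \cite[Proposition 2.5]{C}: a morphism of Hopf algebras is a monomorphism in the category of Hopf algebras if and only if it is a monomorphism in $k$-Coalg when regarded as a morphism between the underlying coalgebras. This is the single place where the full Hopf algebra structure enters, and it provides the bridge that transfers \thref{c} to the present setting; thus $(1)$ is equivalent to the assertion that $\varphi: K \to L$ is a monomorphism in $k$-Coalg.

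Next, applying \thref{c} to the coalgebra map $\varphi: K \to L$, the property that $\varphi$ is a monomorphism in $k$-Coalg is equivalent to each of: the surjectivity of $\eta_K = \Delta_K : K \to K \square_L K$ (item $4$ of the theorem), the equality $H^0(N,K) = H^0(N,L)$ for every $(K,K)$-bicomodule $N$ (item $6$), and $\sum_{i \in I} \varepsilon(x^i) y^i = \sum_{i \in I} x^i \varepsilon(y^i)$ for all $\sum_{i \in I} x^i \otimes y^i \in K \square_L K$ (item $7$). Chaining these equivalences with the previous step yields the equivalence of $(1)$--$(4)$, the parenthetical remark in $(2)$ that $\eta_K$ is then bijective being exactly the corresponding remark in \thref{c}$(4)$. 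I do not expect a genuine obstacle here: this is a formal corollary, and the only thing worth a sentence of justification is the observation of the first paragraph that restricting attention to the underlying coalgebras leaves the data appearing in $(2)$--$(4)$ unchanged.
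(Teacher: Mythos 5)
Your proposal is correct and is exactly the paper's (implicit) argument: the corollary is stated as an immediate consequence of Chirv\u asitu's result that a Hopf algebra map is a monomorphism if and only if its underlying coalgebra map is, combined with the equivalences $1)$, $4)$, $6)$, $7)$ of \thref{c}. Nothing further is needed.
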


\begin{example}
Let $\pi: \mathcal{M}^{2}(k) \rightarrow \mathcal{M}^{2}(k)/I$ be
the canonical projection, where $k$ is a field and $I$ is the
coideal of the comatrix coalgebra $\mathcal{M}^{2}(k)$ generated
by the elements $c_{21}$. It is proved in \cite{N}, using a result
on epimorphisms of finite dimensional algebras \cite[Theorem
3.2]{N}, that $\pi$ is a non-injective monomorphism of coalgebras.
However, this can be easily shown by a simple computation using
$7)$ of \thref{c}.
\end{example}

\begin{remark}
The equivalence of the statements $(1)$ and $(4)$ in \thref{c} can
be alternatively proved by applying the categorical duality $(k$ -
Coalg$)^{op} \cong \mathcal{P}\mathcal{C}_{k}$, $C \mapsto C^{*}$,
between the category $k - $Coalg of $k$-coalgebras and the
category $\mathcal{P}\mathcal{C}_{k}$ of pseudocompact
$k$-algebras described in \cite[Section 3]{Sim}. One should apply
the isomorphism $(C \square C)^{*} \cong C^{*} \widehat{\otimes}
C^{*}$ and the results of Knight (\cite{KNI}), where
$\widehat{\otimes}$ is the complete tensor product, see the
monograph \cite{DNR}.
\end{remark}

\section*{Acknowledgement}

The author wishes to thank Professor Gigel Militaru, who suggested
the problem studied here, as well as colleagues Alexandru Chirv\v
asitu and Drago\c{s} Fr\v a\c{t}il\v a for stimulating
discussions. Also the author is grateful to the referee for many
valuable suggestions which improved the first version of the
paper.

\end{document}